\newcommand{\n}{\noindent}
\newcommand{\bb}[1]{\mathbb{#1}}
\newcommand{\cl}[1]{\mathcal{#1}}
\newcommand{\vp}{\varepsilon}
\newcommand{\clk}{\mathcal{K}}
\newcommand{\clm}{\mathcal{M}}
\newcommand{\clr}{\mathcal{R}}
\newcommand{\cls}{\mathcal{S}}
\newcommand{\clx}{\mathcal{X}}
\newtheorem{lem}{Lemma}
\newtheorem*{tm}{Theorem}
\begin{document}

\title{Some Remarks on the Toeplitz Corona problem}

\author{Ronald G.\ Douglas and Jaydeb Sarkar}

\date{}
\maketitle

\begin{abstract}
In a recent paper, Trent and Wick \cite{TTW} establish a strong relation between the corona problem and the Toeplitz corona problem for a family of spaces over the ball and the polydisk. Their work is based on earlier work of Amar \cite{amar}. In this note, several of their lemmas are reinterpreted in the language of Hilbert modules, revealing some interesting facts and raising some questions about quasi-free Hilbert modules. Moreover,  a modest generalization of their result is obtained.
\end{abstract}
\vspace{.5in}

\n \textsc{2000 Mathematical Subject Classification: 46E50, 46E22, 47B32, 50H05.} 

\n \textsc{Key Words and Phrases:} \ Corona problem, Toeplitz corona problem, Hilbert modules, Kernel Hilbert spaces.

\vfill

\n \footnoterule

\n {\footnotesize This research was supported in part by a grant from the National Science Foundation.}
\newpage 

\newpage 

\vspace{.5in}

\setcounter{section}{0}

\section{Introduction}\label{sec0}

\indent 

While isomorphic Banach algebras of continuous complex-valued functions with the supremum norm can be defined on distinct topological spaces, the results of Gelfand (cf. \cite{DB}) showed that for an algebra $A\subseteq C(X)$, there is a canonical choice of domain, the maximal space of the algebra. If the algebra $A$ contains the function 1, then its maximal ideal space, $M_A$, is compact. Determining $M_A$ for a concrete algebra is not always straightforward. New points can appear, even when the original space $X$ is compact, as the disk algebra, defined on the unit circle $T$, demonstrates. If $A$ separates the points of $X$, then one can identify $X$ as a subset of $M_A$ with a point $x_0$ in $X$ corresponding to the maximal ideal of all functions in $A$ vanishing at $x_0$. When $X$ is not compact, new points must be present but there is still the question of whether the closure of $X$ in $M_A$ is all of $M_A$ or does there exist a ``corona'' $M_A\backslash X\ne \emptyset$.

The celebrated theorem of Carleson states that the algebra $H^\infty({\bb D})$ of bounded holomorphic functions on the unit disk ${\bb D}$ has no corona. There is a corona problem for $H^\infty(\Omega)$ for every domain $\Omega$ in ${\bb C}^m$ but a positive solution exists only for the case $m=1$ with $\Omega$ a finitely connected domain in ${\bb C}$.

One can show with little difficulty that the absence of a corona for an algebra $A$ means that for $\{\varphi_i\}^n_{i=1}$ in $A$, the statement that 
\begin{itemize}
\item[(1)] $\sum\limits^n_{i=1} |\varphi_i(x)|^2 \ge \vp^2>0$ for all $x$ in $X$ 
\end{itemize}
is equivalent to 
\begin{itemize}
\item[(2)] the existence of functions $\{\psi_i\}^n_{i=1}$ in $A$ such that 
 $\sum\limits^n_{i=1} \varphi_i(x) \psi_i(x)=1$ for $x$ in $X$ .
\end{itemize}

The original proof of Carleson \cite{carleson} for $H^\infty({\bb D})$ has been simplified over the years but the original ideas remain vital and important. One attempt at an alternate approach, pioneered by Arveson \cite{Ar} and Shubert \cite{S}, and extended by Agler-McCarthy \cite{AM}, Amar \cite{amar}, and finally Trent--Wick \cite{TTW} for the ball and polydisk, involves an analogous question about Toeplitz operators. In particular, for $\{\varphi_i\}^n_{i=1}$ in $H^\infty(\Omega)$ for $\Omega = {\bb B}^m$ or ${\bb D}^m$, one considers the Toeplitz operator $T_\Phi\colon \ H^2(\Omega)^n\to H^2(\Omega)$ defined $T_\Phi \bm{f} = \sum_{i=1}^{n} \varphi_if_i$ for $\bm{f}$ in $H^2(\Omega)$, where $\bm{f} = f_1 \oplus \cdots \oplus f_n$ and $\clx^n = \clx \oplus \cdots \oplus \clx$ for any space $\clx$. One considers the relation between the operator inequality
\begin{itemize}
\item[(3)] $T_\Phi T^*_\Phi\ge \vp^2I$ for some $\vp>0$ 
\end{itemize}
and statement (1). One can readily show  that (3) implies that one can solve (2) where the functions $\{\psi_i\}^n_{n=1}$ are in $H^2(\Omega)$. We will call the existence of such functions, statement (4). The original hope was that one would be able to modify the method or the functions obtained to achieve $\{\psi_i\}^n_{i=1}$ in $H^\infty(\Omega)$. That (1) implies (3)
follows from earlier work of Andersson--Carlsson \cite{AC} for the unit ball and of Varopoulos \cite{varopou}, Li \cite{li}, Lin \cite{lin}, Trent \cite{T} and Treil--Wick \cite{TW} for the polydisk.

In the Trent--Wick paper \cite{TTW} this goal was  at least partially accomplished with the use of (3) to obtain a solution to (4) for the case $m=1$ and for the case $m>1$ if one assumes (3) for a family of weighted Hardy spaces. Their method was based on that of Amar \cite{amar}. 

In this note we provide a modest generalization of the result of Trent--Wick in which weighted Hardy spaces are replaced by cyclic submodules or cyclic invariant subspaces of the Hardy space and reinterpretations are given in the language of Hilbert modules for some of their other results. It is believed that this reformulation clarifies the situation and raises several interesting questions about the corona problem and Hilbert modules. Moreover, it shows various ways the Corona Theorem could be established for the ball and polydisk algebras. However, most of our effort is directed at analyzing the proof in \cite{TTW} and identifying key hypotheses.

\section{Hilbert Modules}\label{sec1}

\indent 

A \emph{Hilbert module} over the algebra $A(\Omega)$, for $\Omega$ a bounded domain in ${\bb C}^m$, is a Hilbert space ${\cl H}$ which is a unital module over $A(\Omega)$ for which there exists $C\ge 1$ so that  $\|\varphi\cdot f\|_{\cl H} \le C\|\varphi\|_{A(\Omega)} \|f\|_{\cl H}$ for $\varphi$ in $A(\Omega)$ and $f$ in ${\cl H}$. Here $A(\Omega)$ is the closure in the supremum norm over $\Omega$ of all functions holomorphic in a neighborhood of the closure of $\Omega$.

We consider Hilbert modules with more structure which  better imitate the classical examples of the Hardy and Bergman spaces.

The Hilbert module ${\cl R}$ over $A(\Omega)$ is said to be \emph{quasi-free of multiplicity one} if it has a canonical identification as a Hilbert space closure of $A(\Omega)$ such that:
\begin{itemize}
\item[(1)] Evaluation at a point $z$ in $\Omega$ has a continuous extension to ${\cl R}$ for which the norm is locally uniformly bounded.
\item[(2)] Multiplication by a $\varphi$ in $A(\Omega)$ extends to a bounded operator $T_\varphi$ in ${\cl L}({\cl R})$.
\item[(3)] For a sequence $\{\varphi_k\}$ in $A(\Omega)$ which is Cauchy in ${\cl R}, \varphi_k(z)\to 0$ for all $z$ in $\Omega$ if and only if $\|\varphi_k\|_{\cl R}\to 0$.
\end{itemize}

We normalize the norm on ${\cl R}$ so that $\|1\|_{\cl R}=1$.

We are interested in establishing a connection between the corona problem for ${\cl M}({\cl R})$ and the Toeplitz corona problem on ${\cl R}$. Here ${\cl M}({\cl R})$ denotes the multiplier algebra for ${\cl R}$; that is, ${\cl M}$ consists of the functions $\psi$ on $\Omega$ for which $\psi{\cl R}\subset {\cl R}$. Since 1 is in ${\cl R}$, we see that ${\cl M}$ is a subspace of ${\cl R}$ and hence consists of holomorphic functions on $\Omega$. Moreover, a standard argument shows that $\psi$ is bounded (cf.\cite{DD}) and hence ${\cl M} \subset H^\infty(\Omega)$. In general, ${\cl M}\ne H^\infty(\Omega)$. 

For $\psi$ in ${\cl M}$ we let $T_{\psi}$ denote the analytic Toeplitz operator in ${\cl L}({\cl R})$ defined by module multiplication by $\psi$. Given functions $\{\varphi_i\}^n_{i=1}$ in ${\cl M}$, the set is said to
\begin{itemize}
 \item[(1)] satisfy the corona condition if $\sum\limits^n_{i=1} |\varphi_i(z)|^2\ge \vp^2$ for some $\vp>0$ and all $z$ in $\Omega$;
\item[(2)] have a corona solution if there exist $\{\psi_i\}^n_{i=1}$ in ${\cl M}$ such that $\sum\limits^n_{i=1} \varphi_i(z) \psi_i(z) = 1$ for $z$ in $\Omega$;
\item[(3)] satisfy the Toeplitz corona condition if $\sum\limits^n_{i=1} T_{\varphi_i} T^*_{\varphi_i} \ge \vp^2 I_{\cl R}$ for some $\vp >0$; and
\item[(4)] satisfy the ${\cl R}$-corona problem if there exist $\{f_i\}^n_{i=1}$ in ${\cl R}$ such that
$\sum\limits^n_{i=1} T_{\varphi_i}f_i = 1$ or $\sum\limits^n_{i=1} \varphi_i(z) f(z_i)=1$ for $z$ in $\Omega$ with $\sum\limits^n_{i=1} \|f_i\|^2 \le \frac1{\vp^2}$.
\end{itemize}

\section{Basic implications}\label{sec2}

\indent

It is easy to show that (2) $\Rightarrow$ (1), (4) $\Rightarrow$ (3) and (2) $\Rightarrow$ (4). As mentioned in the introduction, it has been shown that (1) $\Rightarrow$ (3) in case $\Omega$ is the unit ball ${\bb B}^m$ or the polydisk ${\bb D}^m$ and (1) $\Rightarrow$ (2) for $\Omega={\bb D}$ is  Carleson's Theorem. For a class of reproducing kernel Hilbert spaces with complete Nevanlinna-Pick kernels one knows that (2) and (3) are equivalent \cite{BTV} (cf. \cite{AT} and \cite{EP}). These results are closely related to generalizations of the commutant lifting theorem \cite{NF}. Finally, (3) $\Rightarrow$ (4) results from the range inclusion theorem of the first author as follows (cf. \cite{douglas}).

\begin{lem}\label{lem1}
If $\{\varphi_i\}^n_{i=1}$ in ${\cl M}$ satisfy $\sum\limits^n_{i=1} T_{\varphi_i} T^*_{\varphi_i} \ge \vp^2I_{\cl R}$ for some $\vp >0$, then there exist $\{f_i\}^n_{i=1}$ in ${\cl R}$ such that $\sum\limits^n_{i=1} \varphi_i(z) f_i(z)=1$ for $z$ in $\Omega$ and $\sum\limits^n_{i=1} \|f_i\|^2_{\cl R} \le \frac1{\vp^2}$.
\end{lem}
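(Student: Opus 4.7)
\emph{Plan.} The plan is to recognize the hypothesis as precisely the operator inequality to which the Douglas range inclusion theorem applies, and then to extract the desired $f_i$ by evaluating the resulting factorization at the distinguished vector $1 \in \cl R$.

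First, I would assemble the individual multipliers into a single row operator $T_\Phi\colon \cl R^n \to \cl R$ defined by $T_\Phi(g_1 \oplus \cdots \oplus g_n) = \sum_{i=1}^n T_{\varphi_i} g_i$, so that $T_\Phi T_\Phi^* = \sum_{i=1}^n T_{\varphi_i} T^*_{\varphi_i}$. The hypothesis then reads
\[
I_{\cl R}\, I_{\cl R}^* \;=\; I_{\cl R} \;\le\; \vp^{-2}\, T_\Phi T_\Phi^*,
\]
which is exactly the form to which Douglas's factorization theorem applies. It produces a bounded operator $X\colon \cl R \to \cl R^n$ with $\|X\| \le 1/\vp$ satisfying $T_\Phi X = I_{\cl R}$.

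Next I would set $(f_1,\ldots,f_n) := X(1) \in \cl R^n$, using the distinguished cyclic vector $1 \in \cl R$ (normalized so that $\|1\|_{\cl R} = 1$). Applying $T_\Phi X = I_{\cl R}$ to the vector $1$ yields $\sum_{i=1}^n \varphi_i f_i = 1$ as elements of $\cl R$. Since point evaluation at each $z \in \Omega$ is a continuous linear functional on $\cl R$ (property (1) of a quasi-free module of multiplicity one), and each $T_{\varphi_i}$ acts as ordinary pointwise multiplication, this identity lifts to the pointwise statement $\sum_{i=1}^n \varphi_i(z) f_i(z) = 1$ for every $z \in \Omega$. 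The norm bound is immediate from $\|X\| \le 1/\vp$:
\[
\sum_{i=1}^n \|f_i\|_{\cl R}^2 \;=\; \|X(1)\|_{\cl R^n}^2 \;\le\; \|X\|^2 \,\|1\|_{\cl R}^2 \;\le\; 1/\vp^2.
\]

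The only step of any substance is the invocation of Douglas's theorem, so there is no real obstacle: the operator inequality is already in exactly the right form, the optimal constant in the factorization produces the stated bound $1/\vp$, and continuity of point evaluations converts the $\cl R$-level identity into the required pointwise identity on $\Omega$. Everything else is bookkeeping on the row operator and the cyclic vector.
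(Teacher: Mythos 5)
Your proof is correct and follows essentially the same route as the paper: assemble the row Toeplitz operator $T_\Phi$, note $T_\Phi T_\Phi^* \ge \vp^2 I_{\cl R}$, apply Douglas's range inclusion/factorization theorem to obtain a bounded right inverse with norm at most $1/\vp$, and evaluate it at the cyclic vector $1$. The only cosmetic difference is that you spell out the appeal to continuity of point evaluations to pass from the identity in $\cl R$ to the pointwise identity on $\Omega$, which the paper leaves implicit.
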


\begin{proof}
The assumption that $\sum\limits^n_{i=1} T_{\varphi_i}T^*_{\varphi_i} \ge \vp^2I$ implies that the operator $X\colon \clr^n \to \clr$ defined by $X \bm{f} = \sum_{i=1}^{n} T_{\varphi_1}f_i$ satisfies $XX^* = \sum\limits^n_{i=1} T_{\varphi_i}T^*_{\varphi_i}\ge \vp^2I_{\cl R}$ and hence by \cite{douglas} there exists $Y\colon {\cl R}\to {\cl R}^n$ such that $XY = I_{\clr}$ with $\|Y\| \le \frac1\vp$. Therefore, with $Y1 = f_1 \oplus\cdots\oplus f_n$, we have $\sum\limits_{i=1}^n \varphi_i(z) f_i(z)= \sum\limits^n_{i=1} T_{\varphi_i}f_i = XY1 = 1$ and $\sum\limits^n_{i=1} \|f_i\|^2_{\cl R} = \|Y1\|^2 \le \|Y\|^2\|1\|^2_{\cl R} \le \frac1{\vp^2}$. Thus the result is proved.
\end{proof}

To compare our results to those in \cite{TTW}, we need the following observations.

\begin{lem}\label{lem2}
Let ${\cl R}$ be the Hilbert module $L^2_a(\mu)$  over $A(\Omega)$ defined to be  the closure of $A(\Omega)$ in $L^2(\mu)$ for some probability measure $\mu$ on \textrm{clos} $\Omega$. For $f$ in $L^2_a(\mu)$, the Hilbert modules $L^2_a(|f|^2\ d\mu)$ and $[f]$, the cyclic submodule of ${\cl R}$ generated by $f$, are isomorphic such that $1\to f$.
\end{lem}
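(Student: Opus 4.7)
The plan is to exhibit an explicit unitary module map $U : L^2_a(|f|^2\, d\mu) \to [f]$ that sends $1$ to $f$. The natural candidate is defined on the dense subspace $A(\Omega)$ by $U\varphi := \varphi \cdot f$, and the only thing to check is that this densely defined map is isometric, has dense range, and intertwines the module actions.

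First I would verify isometry on $A(\Omega)$ by the direct computation
\[
\|U\varphi\|_{[f]}^2 = \int_{\mathrm{clos}\,\Omega} |\varphi f|^2\, d\mu = \int_{\mathrm{clos}\,\Omega} |\varphi|^2\, |f|^2\, d\mu = \|\varphi\|_{L^2_a(|f|^2 d\mu)}^2,
\]
which uses nothing beyond the definition of the two norms. Since $|f|^2\, d\mu$ is a finite positive measure (as $f \in L^2_a(\mu)$), the space $L^2_a(|f|^2\, d\mu)$ is well defined as the closure of $A(\Omega)$ in $L^2(|f|^2\, d\mu)$, and because $A(\Omega) \subset L^\infty(\mathrm{clos}\,\Omega)$, module multiplication by $\varphi \in A(\Omega)$ is bounded on it. The isometry identity then lets $U$ extend by continuity to a well-defined isometry from all of $L^2_a(|f|^2\, d\mu)$ into $\mathcal{R}$.

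Next I would identify the range. By the definition of $[f]$ as the cyclic submodule generated by $f$, the set $\{\varphi f : \varphi \in A(\Omega)\}$ is dense in $[f]$; but this is precisely $U(A(\Omega))$, so $U$ has dense range in $[f]$. Combined with isometry, $U$ is unitary onto $[f]$. The module intertwining property $U(\psi \cdot \varphi) = \psi \cdot U(\varphi)$ is immediate from $(\psi \varphi)f = \psi(\varphi f)$ for $\psi, \varphi \in A(\Omega)$, and it passes to the closure since multiplication by $\psi$ is bounded on both spaces. Finally $U(1) = f$ by construction.

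There is essentially no hard step here; the only place one might stumble is in taking seriously the definition of $L^2_a(|f|^2\, d\mu)$ when $f$ vanishes on a set of positive $\mu$-measure (so that $|f|^2\, d\mu$ is degenerate). But defining this space as the $L^2(|f|^2\, d\mu)$-closure of $A(\Omega)$, rather than as a space of holomorphic functions on $\Omega$, sidesteps the issue: the isometric identification is then simply a statement about the Hilbert space completions of $A(\Omega)$ under two norms that happen to agree after the twist by $f$.
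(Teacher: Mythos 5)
Your proof is correct and follows exactly the paper's one-line argument: both define the map on $A(\Omega)$ by $\varphi \mapsto \varphi f$, observe the norm identity $\|\varphi\|_{L^2(|f|^2 d\mu)} = \|\varphi f\|_{L^2(\mu)}$, and extend by continuity. You simply spell out the density-of-range and module-intertwining checks that the paper leaves implicit.
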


\begin{proof}
Note that $\|\varphi\cdot 1\|_{L^2(|f|^2\ d\mu)} = \|\varphi f\|_{L^2(\mu)}$ for $\varphi$ in $A(\Omega)$ and the closure of this map sets up the desired isomorphism.
\end{proof}

\begin{lem}\label{lem2a}
If $\{f_i\}^n_{i=1}$ are functions in $L^2_a(\mu)$ and $g(z) = \sum\limits^n_{i=1} |f_i(z)|^2$, then $L^2_a(g\ d\mu)$ is isomorphic to the cyclic submodule $[f_1\oplus\cdots\oplus f_n]$ of $L^2_a(\mu)^n$ with $1\to f_1\oplus\cdots\oplus f_n$.
\end{lem}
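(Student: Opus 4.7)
The plan is to adapt the proof of Lemma \ref{lem2} verbatim, simply replacing the scalar generator $f$ by the column vector $F = f_1 \oplus \cdots \oplus f_n \in L^2_a(\mu)^n$. First I would note that $g\, d\mu$ is a finite positive measure on $\mathrm{clos}\,\Omega$, since
$$\int_{\Omega} g\, d\mu \;=\; \sum_{i=1}^{n} \int_{\Omega} |f_i|^2\, d\mu \;=\; \sum_{i=1}^{n} \|f_i\|^2_{L^2(\mu)} < \infty,$$
so (after normalizing) $L^2_a(g\, d\mu)$ is well-defined as the closure of $A(\Omega)$ in $L^2(g\, d\mu)$. The central computation is the pointwise identity $\|\varphi(z) F(z)\|^2_{\mathbb{C}^n} = |\varphi(z)|^2 g(z)$, which on integration gives, for every $\varphi$ in $A(\Omega)$,
$$\|\varphi \cdot F\|^2_{L^2(\mu)^n} \;=\; \sum_{i=1}^{n} \int_{\Omega} |\varphi f_i|^2 \, d\mu \;=\; \int_{\Omega} |\varphi|^2 g\, d\mu \;=\; \|\varphi \cdot 1\|^2_{L^2(g\, d\mu)}.$$

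With this in hand I would define a map $U\colon A(\Omega) \to L^2_a(\mu)^n$ by $U(\varphi) = \varphi \cdot F$. By the displayed identity, $U$ is isometric from the dense subspace $A(\Omega) \subset L^2_a(g\, d\mu)$ into $L^2_a(\mu)^n$, and therefore extends uniquely to an isometry on all of $L^2_a(g\, d\mu)$. By construction $U(1) = F = f_1 \oplus \cdots \oplus f_n$, and the module property $U(\psi \varphi) = \psi \cdot U(\varphi)$ for $\psi, \varphi \in A(\Omega)$ extends by continuity to show that $U$ intertwines the $A(\Omega)$-actions. Finally, the range of $U$ is the closure in $L^2_a(\mu)^n$ of $\{\varphi F : \varphi \in A(\Omega)\}$, which is precisely the cyclic submodule $[f_1 \oplus \cdots \oplus f_n]$; hence $U$ is the desired isomorphism.

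The argument is essentially mechanical, so there is no real obstacle. The only mild point of care is the verification that $L^2_a(g\, d\mu)$ satisfies the standing hypothesis that $\mu$ be a probability measure, which is handled by rescaling; and the observation that evaluation-type considerations (such as the identity $\|\varphi F\|^2 = |\varphi|^2 g$ pointwise) make sense $\mu$-a.e., which is standard.
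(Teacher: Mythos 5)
Your argument is exactly the intended one: the paper's proof of Lemma~\ref{lem2a} is literally ``the same proof as before works,'' referring to Lemma~\ref{lem2}, and your proposal carries out precisely that vector-valued adaptation, hinging on the isometric identity $\|\varphi\cdot F\|^2_{L^2(\mu)^n}=\|\varphi\cdot 1\|^2_{L^2(g\,d\mu)}$ and extension by density. Correct, and no meaningful deviation from the paper's route.
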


\begin{proof}
The same proof as before works.
\end{proof}

In \cite{TTW}, Trent--Wick prove this result and use it to replace the $L^2_a$ spaces used by Amar \cite{amar} by weighted Hardy spaces. However, before proceding we want to explore the meaning of this result from the Hilbert module point of view. 
\begin{lem}\label{lem3}
For ${\cl R} = H^2({\bb B}^m)$ (or $H^2({\bb D}^m)$) the cyclic submodule of ${\cl R}^N$  generated by $\varphi_1\oplus\cdots\oplus \varphi_N$ with $\{\varphi_i\}^N_{i=1}$ in $A({\bb B}^m)$ (or $A({\bb D}^m)$) is isomorphic to a cyclic submodule of $H^2({\bb B}^m)$ (or $H^2({\bb D}^m)$).
\end{lem}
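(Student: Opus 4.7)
My plan is to combine Lemmas \ref{lem2} and \ref{lem2a} with a classical outer function construction. First I observe that $H^2(\mathbb{B}^m)$ is realized as $L^2_a(\sigma_m)$ for $\sigma_m$ the normalized surface measure on $\partial \mathbb{B}^m$ (and $H^2(\mathbb{D}^m)$ as $L^2_a$ with respect to normalized Haar measure on $\mathbb{T}^m$). Thus Lemma \ref{lem2a} identifies the cyclic submodule $[\varphi_1 \oplus \cdots \oplus \varphi_N] \subset \mathcal{R}^N$ with $L^2_a(g\, d\sigma_m)$, where $g(z) = \sum_{i=1}^N |\varphi_i(z)|^2$ is continuous and bounded on $\mathrm{clos}\,\mathbb{B}^m$ since every $\varphi_i$ lies in $A(\mathbb{B}^m)$.

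The core step is then to realize $L^2_a(g\, d\sigma_m)$ as a cyclic submodule of $H^2(\mathbb{B}^m)$. By Lemma \ref{lem2}, it suffices to produce an outer function $F \in H^2(\mathbb{B}^m)$ with $|F|^2 = g$ a.e.\ on the sphere: for then Lemma \ref{lem2}, applied with $\mu = \sigma_m$ and $f = F$, gives $L^2_a(g\, d\sigma_m) = L^2_a(|F|^2\, d\sigma_m) \cong [F] \subset H^2(\mathbb{B}^m)$ via $1 \mapsto F$, and composing with the isomorphism from Lemma \ref{lem2a} sends $\varphi_1 \oplus \cdots \oplus \varphi_N \mapsto F$, yielding the desired module isomorphism.

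To produce $F$, I would first dispose of the trivial case $g \equiv 0$, and otherwise verify $\log g \in L^1(\partial \mathbb{B}^m)$. Choosing any $\varphi_{i_0} \not\equiv 0$, the standard log-integrability theorem for $H^2$ of the ball gives $\log |\varphi_{i_0}| \in L^1(\partial \mathbb{B}^m)$; since $g \ge |\varphi_{i_0}|^2$ and $g \le \|g\|_\infty$, we have
\[
2 \log |\varphi_{i_0}| \;\le\; \log g \;\le\; \log \|g\|_\infty,
\]
so $\log g \in L^1$. The classical construction of outer functions then yields an outer $F \in H^2(\mathbb{B}^m)$ with $|F|^2 = g$ on $\partial \mathbb{B}^m$, as required.

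The polydisk case proceeds identically, replacing $\sigma_m$ by normalized Haar measure on $\mathbb{T}^m$ and invoking the analogous log-integrability and outer function theory for $H^2(\mathbb{D}^m)$. The main obstacle is precisely this multivariable outer function step, which is nontrivial but classical; the remainder of the argument is a purely formal composition of Lemmas \ref{lem2} and \ref{lem2a}.
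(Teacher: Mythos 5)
Your overall plan is the same as the paper's: use Lemma~\ref{lem2a} to identify $[\varphi_1\oplus\cdots\oplus\varphi_N]$ with the weighted space $L^2_a(g\,d\mu)$ where $g=\sum|\varphi_i|^2$, and then use Lemma~\ref{lem2} to realize $L^2_a(g\,d\mu)$ as the cyclic submodule $[F]$ provided one can produce $F\in H^2$ with $|F|^2=g$ almost everywhere on the boundary. The paper compresses this into one line by citing Lemma~3 of \cite{TTW}, which is exactly the existence of such an $F$. So the reduction is correct and matches the paper.

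The gap is in the step you yourself flag as ``the main obstacle.'' You claim that $\log g\in L^1(\partial\Omega)$ together with ``the classical construction of outer functions'' yields $F\in H^2(\Omega)$ with $|F|^2=g$. For $m\ge 2$ this reasoning does not go through. The one-variable outer function $\exp\bigl(\tfrac12 P[\log g]+i\,\widetilde{\tfrac12 P[\log g]}\bigr)$ relies on the fact that every real harmonic function on $\mathbb D$ is the real part of a holomorphic function. In $\mathbb B^m$ the Poisson (invariant) extension of $\tfrac12\log g$ is $\mathcal M$-harmonic but generally not pluriharmonic, so it is not $\log|F|$ for any holomorphic $F$; in $\mathbb D^m$ the $m$-harmonic extension of $\tfrac12\log g$ is the real part of a holomorphic function only when its Fourier coefficients are supported in $\mathbb Z_{\ge 0}^m\cup\mathbb Z_{\le 0}^m$, which fails for generic $g$. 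Mere log-integrability of $g$ is therefore not sufficient to produce $F$; the existence of a (possibly non-outer, possibly vanishing) $F$ with $|F^*|=g^{1/2}$ a.e.\ in several variables is a genuinely hard theorem -- for the ball it rests on Aleksandrov-type results on prescribed boundary modulus, and the polydisk case needs its own argument -- and this, rather than the formal composition of Lemmas~\ref{lem2} and~\ref{lem2a}, is the actual content of Lemma~3 in \cite{TTW}. As written, your proof asserts the conclusion of that lemma without supplying or correctly citing an argument that covers $m\ge2$; to close the gap you would need to invoke the multivariable prescribed-modulus theorems explicitly (or simply cite Trent--Wick's Lemma~3 as the paper does).
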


\begin{proof}
 Combining Lemma 3 in \cite{TTW} with the observations made in Lemmas \ref{lem2} and \ref{lem2a} above yields the result.
\end{proof}

There are several remarks and questions that arise at this point. First, does this result hold for arbitrary cyclic submodules in $H^2({\bb B}^m)$ or $H^2({\bb D}^m)$, which would require an extension of Lemma 3 in \cite{TTW} to arbitrary $\bm{f}$ in $H^2({\bb B}^m)^n$ or $H^2({\bb D}^m)^n$? (This equivalence follows from the fact that a converse to Lemma \ref{lem2} is valid.) It is easy to see that the lemma can be extended to an $n$-tuple of the form $f_1 h \oplus \cdots \oplus f_n h$, where the $\{f_i\}_{i=1}^{n}$ are in $A(\Omega)$ and $h$ is in $\clr$. Thus one need only assume that the quantities $\{\frac{f_i}{f_j}\}_{i,j =1}^{n}$ are in $A(\Omega)$ or even only equal a.e to some continuous functions on $\partial \Omega$.

Second, the argument works for cyclic submodules in $H^2({\bb B}^m) \otimes \ell^2$ or $H^2({\bb D}^m)\otimes \ell^2$ so long as the generating vectors are in $A(\Omega)$ since Lemma 3 in \cite{TTW} holds in this case also.

Note that since every cyclic submodule of $H^2({\bb D})\otimes \ell^2$ is isomorphic to $H^2({\bb D})$, the classical Hardy space has the property that all cyclic submodules for the case of infinite multiplicity already occur, up to isomorphism, in the multiplicity one case. Although less trivial to verify, the same is true for the bundle shift Hardy spaces of multiplicity one over a finitely connected domain in ${\bb C}$ \cite{AD}. 

Third, one can ask if there are other Hilbert modules ${\cl R}$ that possess the property that every cyclic submodule of ${\cl R}\otimes {\bb C}^n$ or ${\cl R}\otimes \ell^2$ is isomorphic to a submodule of ${\cl R}$? The Bergman module $L^2_a({\bb D})$ does not have this property since the cyclic submodule of $L^2_a({\bb D})\oplus L^2_a({\bb D})$ generated by $1\oplus z$  is not isomorphic to a submodule of $L^2_a({\bb D})$. If it were, we could write the function $1+|z|^2 = |f(z)|^2$ for some $f$ in $L^2_a({\bb D})$ which a simple calculation using a Fourier expansion in terms of $\{z^n\bar z^m\}$ shows is not possible.

We now abstract some other properties of the Hardy modules over the ball and polydisk.

We say that the Hilbert module ${\cl R}$ over $A(\Omega)$ has the \emph{modulus approximation property (MAP)} if for vectors $\{f_i\}^N_{i=1}$ in $\clm \subseteq {\cl R}$, there is a vector $k$ in ${\cl R}$ such that $\|\theta k\|^2_{\cl R} = \sum\limits^N_{j=1} \|\theta f_j\|^2$ for $\theta$ in ${\cl M}$. The map $\theta k\to \theta f_i \oplus\cdots\oplus \theta f_N$ thus extends to a module isomorphism of $[k]\subset {\cl R}$ and $[f_1\oplus\cdots\oplus f_N]\subset {\cl R}^N$. 

For $z_0$ in $\Omega$, let $I_{z_0}$ denote the maximal ideal in $A(\Omega)$ of all functions that vanish at $z_0$. The quasi-free Hilbert module ${\cl R}$ over $A(\Omega)$ of multiplicity one is said to satisfy the \emph{weak modulus approximation property (WMAP)} if 
\begin{itemize}
\item[(1)] A non-zero vector $k_{z_0}$ in ${\cl R}\ominus I_{z_0}\cdot {\cl R}$ can be written in the form $k_{z_0}\cdot 1$, where $k_{z_0}$ is in ${\cl M}$, and $T_{k_{z_0}}$ has closed range acting on ${\cl R}$. In this case $\clr$ is said to have a \emph{good kernel function}.
\item[(2)] Property (MAP) holds for $f_i = \lambda_i k_{z_i}$, $i=1,\ldots, N$ with $0\le\lambda_i\le 1$ and $\sum\limits^N_{i=1} \lambda^2_i=1$.
\end{itemize}

\section{Main result}\label{sec3}

\indent 

Our main result relating properties (2) and (3) is the following one which generalizes Theorem 1 of \cite{TTW}.

\begin{tm}
Let ${\cl R}$ be a (WMAP) quasi-free Hilbert module over $A(\Omega)$ of multiplicity one and $\{\varphi_1\}^n_{i=1}$ be functions in ${\cl M}$. Then the following are equivalent:
\begin{itemize}
\item[\rm (a)] There exist functions $\{\psi_i\}^n_{i=1}$ in  $H^\infty(\Omega)$ such that $\sum\limits^n_{i=1} \varphi_i(z)\psi_i(z)=1$ and $\sum|\psi_i(z)| \le \frac1{\vp^2}$ for some $\vp >0$ and all $z$ in $\Omega$, and
\item[\rm (b)] there exists $\vp>0$ such that for every cyclic submodule ${\cl S}$ of ${\cl R}$, $\sum\limits^n_{i=1} T^{\cl S}_{\varphi_i} T^{{\cl S}^*}_{\varphi_i} \ge \vp^2I_{\cl S}$, where $T^{\cl S}_\varphi = T_\varphi|_{\cl S}$ for $\varphi$ in ${\cl M}$.
\end{itemize}
\end{tm}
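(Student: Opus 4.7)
The proof has two directions; $(b) \Rightarrow (a)$ is the substantive one, generalizing Theorem 1 of \cite{TTW}.

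For $(a) \Rightarrow (b)$: assuming $\psi_i \in H^\infty(\Omega)$ act as multipliers of $\clr$ with $\sum \varphi_i \psi_i = 1$ and $\sum |\psi_i(z)|^2 \le 1/\vp^2$, fix any cyclic submodule $\cls$ and $f \in \cls$. The vectors $g_i := \psi_i f$ lie in $\cls$, satisfy $\sum \varphi_i g_i = f$, and obey $\sum \|g_i\|_{\clr}^2 \le \|f\|^2/\vp^2$ by the pointwise $\ell^2$-bound on the $\psi_i$. This furnishes a right inverse of norm $\le 1/\vp$ for $T_\Phi^{\cls}\colon \cls^n \to \cls$, which is equivalent to (b).

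For $(b) \Rightarrow (a)$, I would proceed in two stages. Stage 1 (per-point solutions): fix $z_0 \in \Omega$ and apply (b) to the cyclic submodule $[k_{z_0}]$ generated by the good kernel of (WMAP)(1). Viewing $[k_{z_0}]$ as a quasi-free Hilbert module in its own right with the cyclic generator $k_{z_0}$ playing the role of ``$1$'' (concretely via Lemma \ref{lem2} when $\clr = L^2_a(\mu)$, abstractly in general), Lemma \ref{lem1} delivers $g_i^{(z_0)} = \psi_i^{(z_0)} k_{z_0} \in [k_{z_0}]$ with $\sum \varphi_i g_i^{(z_0)} = k_{z_0}$ and $\sum \|\psi_i^{(z_0)}\|^2 \le 1/\vp^2$. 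The quotient $g_i^{(z_0)}/k_{z_0}$ is well-defined by the closed-range hypothesis of (WMAP)(1), and dividing gives the corona identity $\sum \varphi_i \psi_i^{(z_0)} = 1$ on $\Omega$. A direct calculation identifies the reproducing kernel of the renormalized module at $z_0$ as the constant function $1$, so evaluation at $z_0$ is contractive, yielding $\sum |\psi_i^{(z_0)}(z_0)|^2 \le 1/\vp^2$ while $\sum \varphi_i(z_0) \psi_i^{(z_0)}(z_0) = 1$.

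Stage 2 (globalization): patch the per-point solutions into a single $\psi \in H^\infty(\Omega)^n$. For each finite configuration of points $z_1, \ldots, z_N$ with weights $\lambda_j^2$ summing to $1$, (WMAP)(2) produces $k \in \clr$ whose cyclic submodule $[k]$ is isomorphic to $[\lambda_1 k_{z_1} \oplus \cdots \oplus \lambda_N k_{z_N}] \subset \clr^N$; applying the Stage 1 construction inside $[k]$ encodes multi-point information in a single element. Letting the configurations exhaust a countable dense subset of $\Omega$ (with weights chosen appropriately) and extracting a weak-$*$ subsequential limit in the unit ball of $H^\infty(\Omega)^n$ via Banach--Alaoglu, one obtains $\psi_i \in H^\infty(\Omega)$ satisfying $\sum \varphi_i \psi_i = 1$ with the desired $\ell^2$-bound.

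The main obstacle is Stage 2: the per-point solutions of Stage 1 depend on $z_0$ in a way that is not manifestly holomorphic, so one cannot merely set $\psi_i(z_0) := \psi_i^{(z_0)}(z_0)$ and hope for analyticity. Property (WMAP)(2) is essential precisely because it supplies cyclic submodules of $\clr$ that encode multi-point information coherently, enabling the compactness argument---in the spirit of the Amar construction employed in \cite{TTW}---to produce an honest $H^\infty$ solution with the correct pointwise bound.
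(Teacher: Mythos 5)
Your Stage 1 per-point construction is essentially sound, and it matches the paper's treatment of the ``subtle point'' about $1$ not lying in the range of $T_\Phi^{\cls}$ (both you and the paper use (WMAP)(1)'s closed-range hypothesis to divide by the generator). Your direction (a)$\Rightarrow$(b) also matches the standard easy implication, though you tacitly add the hypothesis that the $\psi_i$ act as multipliers of $\clr$, which is stronger than the $H^\infty$ membership actually asserted in (a); the paper is equally silent on this point.

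The genuine gap is in Stage 2 of (b)$\Rightarrow$(a), which is exactly where the real work lies, and you have only gestured at ``the compactness argument in the spirit of the Amar construction'' without supplying the mechanism. Here is why the gesture does not close the gap. Applying the Stage-1 construction inside the cyclic submodule $[k]$ associated (via (WMAP)(2)) to a weighted configuration $\lambda_1^2,\ldots,\lambda_N^2$ at points $z_1,\ldots,z_N$ yields a solution $\bm{f}$ whose \emph{weighted average} $\sum_{j=1}^N \lambda_j^2 \bigl\| \tfrac{k_{z_j}}{\|k_{z_j}\|} \bm{f} \bigr\|^2$ is controlled. That solution depends on the choice of weights; it does not give simultaneous pointwise control at all $N$ points. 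To pass from control of weighted averages to pointwise bounds at each $z_i$, one must produce a \emph{single} correction $\bm{h}_N\in\clk$ for which the weighted quantity $\clf_N(g,\bm{f}_0-\bm{h}_N)$ is small \emph{uniformly over all} $g$ in the simplex $\clc_N$; combined with the lemma $\|\bm{h}(z_0)\|^2_{\bb{C}^n}\le \bigl\|\tfrac{k_{z_0}}{\|k_{z_0}\|}\bm{h}\bigr\|^2$ this delivers the pointwise bound at each of $z_1,\ldots,z_N$. The paper obtains such an $\bm{h}_N$ by showing $\clf_N$ is affine in $g$ and convex in $\bm{h}$ and then invoking von Neumann's min-max theorem to interchange $\inf_{\bm{h}\in\clk}$ and $\sup_{g\in\clc_N}$. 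This min-max step is the crux of the Amar/Trent--Wick argument and it is entirely absent from your proposal. Without it, your proposed weak-$*$ extraction has nothing to act on: your per-configuration solutions are bounded only in $\clr^n$, not in $H^\infty(\Omega)^n$, and you have no vector satisfying $T_\Phi\bm{f}_N=1$ with $\|\bm{f}_N(z_i)\|^2\le 1/\vp^2 + 1/N$ for all $i\le N$ simultaneously. It is precisely the sequence $\bm{f}_N=\bm{f}_0-\bm{h}_N$ arising from the min-max step, uniformly bounded in $\clr^n$ with uniformly bounded point evaluations on the dense set $\{z_i\}$, whose weak-$*$ limit is shown to lie in $H^\infty(\Omega)^n$ with the required bound.
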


\noindent\textit{Proof.}
We follow the proof in \cite{TTW} making a few changes. Fix a dense set $\{z_i\}^\infty_{i=2}$ of $\Omega$.

First, we define for each positive integer $N$, the set ${\cl C}_N$ to be the convex hull of the functions $\left\{\frac{|k_{z_i}|^2}{\|k_{z_i}\|^2}\right\}^N_{i=2}$ and the function $1$ for $i = 1$ with abuse of notation. Since ${\cl R}$ being (WMAP) implies that it has a good kernel function, ${\cl C}_N$ consist of non-negative continuous functions on $\Omega$. For a function $g$ in the convex hull of the set $\left\{\frac{|k_{z_i}|^2}{\|k_{z_i}\|^2}\right\}^N_{i=1}$, the vector $\lambda_1 \frac{k_{z_1}}{\|k_{z_1}\|^2} \oplus\cdots\oplus \lambda_N \frac{k_{z_N}}{\|k_{z_N}\|^2}$ is in ${\cl R}^N$. By definition there exists $G$ in $\clr$ such that $[G]\cong \left[\lambda_1 \frac{k_{z_1}}{\|k_{z_1}\|} \oplus\cdots\oplus \lambda_N \frac{k_{z_N}}{\|k_{z_N}\|}\right]$ by extending the map $\theta G\to \lambda_1 \frac{\theta k_{z_1}}{\|k_{z_1}\|} \oplus\cdots\oplus \lambda_N \frac{\theta k_{z_N}}{\|k_{z_N}\|}$ for $\theta$ in ${\cl M}$.

Second, let $\{\varphi_1,\ldots,\varphi_n\}$ be in ${\cl M}$ and let $T_\Phi$ denote the column operator defined from ${\cl R}^n$ to ${\cl R}$ by $T_\Phi(f_1 \oplus\cdots\oplus f_n) = \sum\limits^n_{i=1} T_{\varphi_i}f_i$ for $\bm{f} = (f_1\oplus\cdots\oplus f_n)$ in ${\cl R}^n$ and set ${\cl K} = \ker T_\Phi \subset {\cl R}^n$. Fix $\bm{f}$ in ${\cl R}^n$.
Define the function
\[
 {\cl F}_N\colon \ {\cl C}_N \times {\cl K} \to [0,\infty)
\]
by
\[
{\cl F}_N(g,\pmb{h}) = \sum^N_{i=1} \lambda^2_i\left\|\frac{k_{z_i}}{\|k_{z_i}\|}(\pmb{f}-\pmb{h})\right\|^2 \text{ for $\pmb{h}=h_1 \oplus\cdots\oplus h_n$ in } {\cl R}^n,
\]
where $g = \sum\limits_{i=1}^n \lambda^2_i\frac{|k_{z_i}|^2}{\|k_{z_i}\|^2}$ and $\sum\limits_{i=1}^{n} \lambda_i^2 =1$. We are using the fact that the $k_{z_i}$ are in ${\cl M}$ to realize $k_{z_i}(\bm{f}-\bm{h})$ in ${\cl R}^n$.

Except for the fact we are restricting the domain of ${\cl F}_N$ to ${\cl C}_N\times {\cl K}$ instead of ${\cl C}_N \times {\cl R}^n$, this definition agrees with that of \cite{TTW}. Again, as in \cite{TTW}, this function is linear in $g$ for fixed $\bm{h}$ and convex in $\bm{h}$ for fixed $g$. (Here one uses the triangular inequality and the fact that the square function is convex.)

Third, we want to identify ${\cl F}_N(g,\bm{h})$ in terms of the product of Toeplitz operators $(T_\Phi^{{{\cl S}_g}}) (T_\Phi^{{{\cl S}_g}})^*$, where ${\cl S}_g$ is the cyclic submodule of ${\cl R}$ generated by a vector $P$ in $\clr$ as given in Lemma \ref{lem2a} such that the map $P \rightarrow  \left(\lambda_1\ \frac{k_{z_1}}{\|k_{z_1}\|} \oplus\cdots\oplus\right.$ $\left. \lambda_N \frac{k_{z_N}}{\|k_{z_N}\|}\right)$ extends to a module isomorphism with 
$g = \sum\limits^N_{i=1} \lambda_i^2 \frac{|k_{z_i}|^2}{\|k_{z_i}\|^2}$,  $0 \le \lambda_j^2 \le 1$, and $\sum\limits^N_{i=1} \lambda_j^2=1$.

Note for $\pmb{f}$ in ${\cl R}^n$, $\inf\limits_{\pmb{h}\in {\cl K}} {\cl F}_N(g, \pmb{h}) \le \frac1{\vp^2} \|T_\Phi\pmb{f}\|^2$ if $T^{{\cl S}_g}_\Phi(T^{{\cl S}_g}_\Phi)^* \ge \vp^2 I_{{\cl S}_g}$. Thus, if $T^{\cl S}_\Phi(T^{\cl S}_\Phi)^*\ge \vp^2 I_{\cl S}$ for every cyclic submodule of ${\cl R}$, we have $\inf\limits_{\pmb{h}\in {\cl K}} {\cl F}_N(g,\pmb{h}) \le \frac1{\vp^2}\|T_\Phi\pmb{f}\|^2$. Thus from the von~Neumann min-max theorem we obtain $\inf\limits_{\pmb{h}\in {\cl K}} \sup\limits_{g\in {\cl C}_N} {\cl F}_N(g,\pmb{h}) = \sup\limits_{g\in {\cl C}_N} \inf\limits_{\pmb{h}\in {\cl K}} {\cl F}_N(g,\bm{h}) \le \frac1{\vp^2}\|T_\Phi\pmb{f}\|^2$. 

From the inequality $T_\Phi T^*_\Phi\ge \vp^2I_{\cl R}$, we know that there exists $\pmb{f}_0$ in ${\cl R}^n$ such that $\|\bm{f}_0\| \le \frac1{\vp} \|1\| = \frac1\vp$ and $T_\Phi \pmb{f}_0=1$. Moreover, we can find $\pmb{h}_N$ in ${\cl K}$ such that ${\cl F}_N(g,\pmb{h}_N) \le \left(\frac1{\vp^2} + \frac1N\right) \|T_\Phi\pmb{f}_0\|^2 = \frac1{\vp^2} + \frac1N$ for all $g$ in ${\cl C}_N$. In particular, for $g_i = \frac{|k_{z_i}|^2}{\|k_{z_i}\|^2}$, we have $T^{{\cl S}_{g_i}}_\Phi(T^{{\cl S}_{g_i}}_\Phi)^* \ge \vp^2 I_{{\cl S}_{g_i}}$, where $\left\|\frac{k_{z_i}}{\|k_{z_i}\|} (\pmb{f}_0- \pmb{h}_N)\right\|^2 < \frac{1}{{\vp}^2} + \frac1N$.

There is one subtle point here in that 1 may not be in the range of $T^{\cl S}_\Phi$. However, if $P$ is a vector generating the cyclic module ${\cl S}_g$, then $P$ is in $\clm$ and $T_P$ has closed range. To see this recall that the map $$\theta P \rightarrow \lambda_1 \frac{\theta k_{z_1}}{\|k_{z_1}\|} \oplus \cdots \oplus \lambda_N \frac{\theta k_{z_N}}{\|k_{z_N}\|}$$ for $\theta$ in $\clm$ is an isometry. Since the functions $\{\frac{k_{z_i}}{\|k_{z_i}\|}\}_{i=1}^N$ are in $\clm$ by assumption, it follows that the operator $M_P$ is bounded on $\clm \subseteq \clr$ and has closed range on $\clr$ since the operators $M_{\frac{k_{z_i}}{\|k_{z_i}\|}}$ have closed range, again by assumption. Therefore, find a vector $\pmb{f}$ in ${\cl S}_g^n$ so that $T_\Phi\pmb{f} = P$. But if $\pmb{f} = f_1\oplus\cdots\oplus f_n$, then $f_i$ is in $[P]$ and hence has the form $f_i= P \tilde f_i$ for $\tilde f_i$ in $\clr$. Therefore, $T_\Phi T_P\tilde{\pmb{f}} = P$ or $T_\Phi\tilde{\pmb{f}} = 1$ which is what is needed since in the proof $\pmb{f}_0 - \tilde{\pmb{f}}$ is in ${\cl K}$.

To continue the proof we need the following lemma.

\begin{lem}
If $z_0$ is a point in $\Omega$ and $\pmb{h}$ is a vector in ${\cl R}^n$, then $\|\pmb{h}(z_0)\|^2_{{\bb C}^n} \le \left\|\frac{k_{z_0}}{\|k_{z_0}\|}\pmb{h}\right\|^2$.
\end{lem}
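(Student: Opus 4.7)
First I would reduce to the scalar case $n=1$. Writing $\bm{h} = h_1\oplus\cdots\oplus h_n$, the left-hand side is $\sum_i |h_i(z_0)|^2$ and the right-hand side is $\sum_i \bigl\|(k_{z_0}/\|k_{z_0}\|)\cdot h_i\bigr\|_{\clr}^2$, so after clearing the common factor $\|k_{z_0}\|_{\clr}^{-2}$ it suffices to prove the one-variable estimate
\[
|h(z_0)|^2\,\|k_{z_0}\|_{\clr}^2 \;\le\; \|k_{z_0}\cdot h\|_{\clr}^2
\]
for any $h\in\clr$, where the $k_{z_0}$ multiplying $h$ is the multiplier in $\clm$ and the $k_{z_0}$ on the left is the vector $k_{z_0}\cdot 1\in\clr\ominus I_{z_0}\cdot\clr$ (the WMAP hypothesis identifies the two).

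Next I would exploit the orthogonal decomposition
\[
\clr \;=\; (\clr\ominus I_{z_0}\!\cdot\!\clr)\,\oplus\,\overline{I_{z_0}\!\cdot\!\clr}.
\]
Multiplicity one makes the first summand one-dimensional, spanned by the nonzero vector $k_{z_0}$, while property (1) in the definition of a quasi-free module guarantees that evaluation at $z_0$ is continuous on $\clr$ and therefore vanishes on the closure $\overline{I_{z_0}\cdot\clr}$. A small preliminary observation is that $k_{z_0}(z_0)\neq 0$: otherwise $k_{z_0}$ would lie in $\overline{I_{z_0}\cdot\clr}$ and be orthogonal to itself. Since $k_{z_0}\in\clm$, the product $k_{z_0}\cdot h$ lies in $\clr$, and the decomposition applied to it gives $k_{z_0}\cdot h = c\,k_{z_0}+g$ for some $c\in\bb{C}$ and some $g\in\overline{I_{z_0}\cdot\clr}$. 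Evaluating at $z_0$ yields $k_{z_0}(z_0)\,h(z_0)=c\,k_{z_0}(z_0)$, hence $c=h(z_0)$.

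Taking squared norms and using orthogonality then gives
\[
\|k_{z_0}\cdot h\|_{\clr}^2 \;=\; |h(z_0)|^2\,\|k_{z_0}\|_{\clr}^2 + \|g\|_{\clr}^2 \;\ge\; |h(z_0)|^2\,\|k_{z_0}\|_{\clr}^2,
\]
which is precisely the scalar estimate above; summing over the $n$ coordinates completes the argument. There is really no serious obstacle here: the only point that needs any care is the clean separation of the two uses of the symbol $k_{z_0}$ (vector versus multiplier) and the observation that $k_{z_0}(z_0)\neq 0$, both of which are immediate from the WMAP hypothesis together with multiplicity one and the continuity of point evaluation on $\clr$.
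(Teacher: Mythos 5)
Your argument is correct and reaches the same conclusion, but it is packaged differently from the paper's. The paper works coordinate-wise with $h_i\in A(\Omega)$, uses the eigenvector identity $T_{h_i}^*k_{z_0}=\overline{h_i(z_0)}\,k_{z_0}$ (which follows from $k_{z_0}\perp I_{z_0}\cdot\clr$), rewrites $\langle k_{z_0},T_{h_i}k_{z_0}\rangle$ as $\langle k_{z_0},T_{k_{z_0}}h_i\rangle$ using commutativity, applies Cauchy--Schwarz, and passes to general $\bm{h}\in\clr^n$ by density at the very end. You instead decompose $\clr=(\clr\ominus\overline{I_{z_0}\cdot\clr})\oplus\overline{I_{z_0}\cdot\clr}$, project $k_{z_0}h$ onto the first summand, identify the coefficient as $h(z_0)$, and read off the inequality from orthogonality. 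These are really two presentations of the same computation: your ``projection coefficient'' is exactly the paper's inner product $\langle k_{z_0}h,k_{z_0}\rangle=h(z_0)\|k_{z_0}\|^2$, and your orthogonality inequality is exactly their Cauchy--Schwarz step. What your version buys is brevity and a clear geometric picture; what the paper's buys is that it does not rely on the one-dimensionality of $\clr\ominus\overline{I_{z_0}\cdot\clr}$.

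Two small points you glide over. First, your decomposition $k_{z_0}h=c\,k_{z_0}+g$ assumes that $\clr\ominus\overline{I_{z_0}\cdot\clr}$ is spanned by $k_{z_0}$; this is morally part of ``multiplicity one'' but is not spelled out in the paper's axioms for a quasi-free module, and the (WMAP) condition is phrased so as to dodge the issue. You can avoid it entirely by projecting onto $\bb{C}k_{z_0}$ rather than onto $\clr\ominus\overline{I_{z_0}\cdot\clr}$: then $\|k_{z_0}h\|^2\ge|\langle k_{z_0}h,k_{z_0}\rangle|^2/\|k_{z_0}\|^2$ and the same computation of the inner product finishes the job, with no dimension hypothesis. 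Second, the identity $(k_{z_0}h)(z_0)=k_{z_0}(z_0)h(z_0)$ for arbitrary $h\in\clr$ is the multiplier-evaluation property; it holds, but the cleanest route (and the one the paper takes) is to verify everything first for $h\in A(\Omega)$, where it is tautological, and then pass to the closure using the continuity of both sides in the $\clr$-norm. With those two adjustments your proof is airtight and arguably cleaner than the original.
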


\begin{proof}
Suppose $\bm{h} = h_1 \oplus \cdots \oplus h_n$ with $\{h_i\}_{i=1}^n$ in $A(\Omega)$. Then $T^*_{h_i} k_{z_0} = \overline{h_i(z_0)} k_{z_0}$ and hence $$\overline{h_i(z_0)} \|k_{z_0}\|^2 = < T^*_{h_i} k_{z_0}, k_{z_0}> = < k_{z_0}, T_{h_i} k_{z_0}>$$
since $T_{k_{z_0}} h_i = T_{h_i} k_{z_0}$. (We are using the fact the $k_{z_0} h_i = k_{z_0} h_i\cdot1 = h_i k_{z_0}\cdot1 = h_i k_{z_0}$.) Therefore, $$|\overline{h_i(z_0)}| \|k_{z_0}\|^2 = | < k_{z_0}, T_{k_{z_0}} h_i>| \leq \|k_{z_0}\|^2 \|T_{\frac{k_{\bm{z}_0}}{\|k_{\bm{z}_0}\|}} h_i\|,$$ or, 
$$|\overline{h_i(z_0)}| \leq \|T_{\frac{k_{z_0}}{\|k_{z_0}\|}} h_i\|.$$ Finally, 
$$\|\bm{h}(z_0)\|_{\mathbb{C}^n}^2 = \sum_{i=1}^{n} |h_i(z_0)|^2 \leq \| T_{\frac{k_{z_0}}{\|k_{z_0}\|}} \bm{h}\|^2,$$ 
and since both terms of this inequality are continuous in the $\clr$-norm, we can eliminate the assumption that $\bm{h}$ is in $A(\Omega)^n$. \qed

\vspace{0.1in}

Returning to the proof of the theorem, we can apply the lemma to conclude that $\|(\pmb{f}_0 - \pmb{h}_0)(z)\|^2_{{\bb C}^n} \le \left\|\frac{k_{z_i}}{\|k_{z_i}\|}(\pmb{f}_0-\pmb{h}_0)\right\|^2 \le \frac1{\vp^2} + \frac1N$. Therefore, we see that the vector $\pmb{f}_N = \pmb{f}_0 - \pmb{h}_N$ in ${\cl R}^n$ satisfies
\begin{itemize}
 \item[(1)] $T_\Phi(\bm{f}_N-\bm{h}_N) = 1$,
\item[(2)] $\|\bm{f}_N - \bm{h}_N\|_{\clr}^2 \leq \frac{1}{\vp^2}+\frac{1}{N}$ and
\item[(3)] $\|(\pmb{f}_N - \bm{h}_N)(z_i)\|^2_{{\bb C}^n} \le \frac1{\vp^2} + \frac1N$ for $i=1,\ldots, N$.
\end{itemize}
Since the sequence $\{\pmb{f}_N\}^\infty_{N=1}$ in ${\cl R}^n$ is uniformly bounded in norm, there exists a subsequence converging in the weak$^*$-topology to a vector $\pmb{\psi}$ in $\clr^n$. Since weak$^*$-convergence implies pointwise convergence, we see that $\sum\limits^n_{j=1} \varphi_j\psi_j=1$ and $\|\psi_j(z_i)\|^*_{{\bb C}^n} \le \frac1{\vp^2}$ for all $z_i$. Since $\bm{\psi}$ is continuous on $\Omega$ and the set $\{z_i\}$ is dense in $\Omega$, it follows that $\pmb{\psi}$ is in $H^\infty_{{\bb C}^n}(\Omega)$ and $\|\pmb{\psi}\|\le \frac1{\vp^2}$ which concludes the proof.
\end{proof}

Note that we conclude that $\bm{\psi}$ is in $H^{\infty}(\Omega)$ and not in $\clm$ which would be the hoped for result.

One can note that the argument involving the min-max theorem enables one to show that there are vectors $\bm{h}$ in $\clk$ which satisfy $$\| k_{z_i} (\bm{f} - \bm{h}) \|^2 \leq \frac{1}{\vp^2} + \frac{1}{N}.$$  Moreover, this shows that there are vectors $\tilde{\bm{f}}$ so that $T_{\Phi} \tilde{\bm{f}} = 1$, $\| \tilde{\bm{f}}\|^2 \leq \frac{1}{\vp^2} + \frac{1}{N}$, and $\|\tilde{\bm{f}} (z_i)\|^2 \leq \frac{1}{\vp^2} + \frac{1}{N}$ for $i = 1,\ldots, N$. An easy compactness argument completes the proof since the sets of vectors for each $N$ are convex, compact and nested and hence have a point in common.

\section{Concluding comments}

\indent 

With the definitions given, the question arises of which Hilbert modules are (MAP) or which quasi-free ones are (WMAP).  Lemma \ref{lem3} combined with observations in \cite{TTW} show that both $H^2({\bb B}^m)$ and $H^2({\bb D}^m)$ are WMAP. Indeed any $L^2_a$ space for a measure supported on $\partial \mathbb{B}^m$ or the distinguished boundary of $\mathbb{D}^m$ has these properties. One could also ask for which quasi-free Hilbert module $\clr$ the kernel functions $\{k_z\}_{z\in\Omega}$ are in ${\cl M}$ and whether the Toeplitz operators $T_{k_z}$ are invertible operators as they are in the cases of $H^2({\bb B}^m)$ and $H^2({\bb D}^m)$. It seems possible that the kernel functions for all quasi-free Hilbert modules might have these properties when $\Omega$ is strongly pseudo-convex, with smooth boundary. In many concrete cases, the $k_{z_0}$ are actually holomorphic on a neighborhood of the closure of $\Omega$ for $z_0$ in $\Omega$, where the neighborhood, of course, depends on $z_0$. 

Note that the formulation of the criteria in terms of a cyclic submodule $\cls$ of the quasi-free Hilbert modules makes it obvious that the condition $$T_{\Phi}^{\cls} (T_{\Phi}^{\cls})^* \geq \vp^2 I_{\cls}$$ is equivalent to $$T_{\Phi} T_{\Phi}^* \geq \vp^2 I_{\clr}$$ if the generating vector for $\cls$ is a cyclic vector. This is Theorem 2 of \cite{TTW}. Also it is easy to see that the assumption on the Toeplitz operators for all cyclic submodules is equivalent to assuming it for all submodules. That is because $$\|(P_{\cls} \otimes I_{\mathbb{C}^n}) T_{\Phi}^* f\| \geq \|(P_{[\bm{f}]} \otimes I_{\mathbb{C}^n}) T_{\Phi}^* f\|$$ for $f$ in the submodule $\cls$. 

If for the ball or polydisk we knew that the function ``representing'' the modules of a vector-valued function could be taken to be continuous on clos$(\Omega)$ or cyclic, the corona problem would be solved for those cases. No such result is known, however, and it seems likely that such a result is false.

Finally, one would also like to reach the conclusion that the function $\bm{\psi}$ is in the multiplier algebra even if it is smaller than $H^{\infty}(\Omega)$. In the recent paper \cite{CSW} of Costea, Sawyer and Wick this goal is achieved for a family of spaces which includes the Drury-Arveson space. It seems possible that one might be able to modify the line of proof discussed here to involve derivatives of the $\{\varphi_i\}_{i=1}^{n}$ to accomplish this goal in this case, but that would clearly be more difficult.

\vspace{1in}

\n\begin{tabular}{l}
Department of Mathematics\\
\noalign{\vspace{-7pt}}
Texas A\&M University\\
\noalign{\vspace{-7pt}}
College Station, TX\ 77843-3368\\
\noalign{\vspace{-7pt}}
Email address:\ rdouglas@math.tamu.edu\\
\noalign{\vspace{-7pt}}
\phantom{Email address:}\ jsarkar@math.tamu.edu
\end{tabular}

\end{document}